\newcommand{\x}{\boldsymbol{x}}
\newcommand{\y}{\boldsymbol{y}}
\newtheorem{theorem}{Theorem}[section]
\newtheorem{corollary}[theorem]{Corollary}
\theoremstyle{definition}
\newtheorem{remark}[theorem]{Remark}
\newtheorem{example}[theorem]{Example}
\title{Bounds for sets with few distances distinct modulo a prime ideal}
\author{
Hiroshi Nozaki\thanks{Department of Mathematics Education, 
	Aichi University of Education, 
	1 Hirosawa, Igaya-cho, 
	Kariya, Aichi 448-8542, 
	Japan. {\tt hnozaki@auecc.aichi-edu.ac.jp}}}
\begin{document}

\maketitle

\renewcommand{\thefootnote}{\fnsymbol{footnote}}
\footnote[0]{2010 Mathematics Subject Classification: 
05D05 (05B30)
}

\begin{abstract}
Let $\mathcal{O}_K$ be the ring of integers of an algebraic number field $K$ embedded into $\mathbb{C}$. 
Let $X$ be a subset of the Euclidean space $\mathbb{R}^d$, and $D(X)$ be the set of the squared distances of two distinct points in $X$. 
In this paper, 
we prove that if $D(X)\subset \mathcal{O}_K$ and there exist $s$ values $a_1,\ldots, a_s \in \mathcal{O}_K$ distinct modulo a prime ideal $\mathfrak{p}$ of $\mathcal{O}_K$ such that each $a_i$ is not zero modulo $\mathfrak{p}$ and each element of $D(X)$ is congruent to some $a_i$, then  $|X| \leq \binom{d+s}{s}+\binom{d+s-1}{s-1}$.   
\end{abstract}

\textbf{Key words}: 
$s$-distance set, algebraic number field

\section{Introduction}
This paper is devoted to giving an upper bound on the cardinalities of certain  finite sets $X$ in a metric space $M$, that have some special properties of the values of distances appearing in $X$. A finite set $X$ in $M$ is called an {\it $s$-distance set} if the number of distances of two distinct points in $X$ is equal to $s$. One of the major problems for $s$-distance sets is to determine the largest possible $s$-distance sets for given $s$, that is motivated from the extremal set theory. For this purpose, we need to give (or improve) upper bounds on the size and construct large sets. 
For small $s$, there are remarkable successful cases that can determine the largest sets, for example,  $2$-distance sets on a Euclidean sphere \cite{GY18}, sets of equiangular lines in Euclidean spaces \cite{JTYZZ21}, and several $s$-distance sets in the real, complex, or quaternionic projective spaces \cite{L92}  or in polynomial association schemes \cite{D73,DGS77,DL98}.  

In the literature of combinatorial geometry, 
upper bounds for $s$-distance sets for $L$-intersecting families have been obtained. 
 An {\it $L$-intersecting family} $\mathfrak{F}$ is a family of subsets of a finite set $F$
that satisfies $|A \cap B| \in L$ for any distinct $A,B \in \mathfrak{F}$ for some $L\subset \{0,1,\ldots, n-1\}$, where $|F|=n$. An $L$-intersecting family $\mathfrak{F}$ is said to be {\it $k$-uniform} if $|A|=k$ for each $A \in \mathfrak{F}$ for some constant $k$. 
For $k$-uniform $L$-intersecting families $\mathfrak{F}$, 
Ray-Chaudhuri and Wilson \cite{R-CW75} proved an upper bound $|\mathfrak{F}| \leq \binom{n}{s}$, where $|L|=s$. This case corresponds to $s$-distance sets in Johnson association schemes. After this work, Frankl and Wilson \cite{FW81} obtained $|\mathfrak{F}|\leq \sum_{i=0}^s\binom{n}{i}$ without the assumption of $k$-uniform. 

Frankl and Wilson \cite{FW81} also proved a {\it modular version} of the upper bound for $k$-uniform $L$-intersecting families. Namely they interpreted the sizes of the intersections as elements of $\mathbb{Z}/p \mathbb{Z}$ for some prime number $p$. Suppose the set $L$ has only $s$ elements distinct modulo $p$, and note that $|L|$ may be greater than $s$.  
For $k$-uniform $L$-intersecting families $\mathfrak{F}$, if  
$k \not \equiv a \pmod{p}$ for each $a \in L$, then 
Frankl--Wilson  \cite{FW81} showed that $|\mathfrak{F}| \leq \binom{n}{s}$. Note that this is the same upper bound as that obtained under the assumption $|L|=s$.  
For $L$-intersecting families $\mathfrak{F}$ with $r$ different sizes of elements of $\mathfrak{F}$ modulo $p$, 
Alon, Babai, and Suzuki \cite{ABS91} proved that $|\mathfrak{F}|\leq  \sum_{i=0}^{r-1}\binom{n}{s-i}$ under a certain weak assumption which is simplified by \cite{HK15}. 
The upper bound in \cite{ABS91} is proved by Koornwinder's method \cite{K77}, which gives upper bounds on the size $|X|$ by proving the linear independence of some polynomial functions that have a bijective correspondence to $X$. 

We have upper bounds for Euclidean $s$-distance sets with several conditions,  which are counterparts of that of $L$-intersecting families. Let $X$ be an $s$-distance set in the Euclidean space $\mathbb{R}^d$.  
For $X$ in the unit sphere $S^{d-1}$, which corresponds to the condition of $k$-uniform, Delsarte, Goethals, and Seidel \cite{DGS77} proved that $|X| \leq \binom{d+s-1}{s}+\binom{d+s-2}{s-1}$. 
With no assumption, Bannai, Bannai, and Stanton \cite{BBS83} proved that $|X| \leq \binom{d+s}{s}$. 
For $X$ in $r$ concentric spheres, which corresponds to the condition of $r$ different sizes, Bannai, Kawasaki, Nitamizu, and Sato \cite{BKNS03}
obtained that $|X| \leq \sum_{i=0}^{2r-1}\binom{d+s-1-i}{s-i}$. 
Recently simple alternative proofs of these upper bounds are given in \cite{HR21,PP21}. 

Blokhuis \cite{BT} gave a modular version of upper bounds for Euclidean sets, assuming the squared distances are rational integers. Let $D(X)$ be the set of the squared Euclidean distances between two distinct points of $X$. 
\begin{theorem}[{mod-$p$ bound \cite{BT}}]  \label{thm:modp}
Let $X$ be a subset of $\mathbb{R}^d$, and $p$ a prime number. 
Suppose $D(X)$ is a subset of rational integers $\mathbb{Z}$. 
If there exist $a_1,\ldots, a_s \in \mathbb{Z}$ distinct modulo $p$ such that 
\begin{enumerate}
    \item for each $i \in \{1,\ldots,s\}$, $a_i \not \equiv 0 \pmod{p}$ and 
    \item for each $\alpha \in D(X)$, there exists $i \in \{1,\ldots,s\}$ such that $\alpha \equiv a_i \pmod{p}$,
\end{enumerate}
then 
\[
|X| \leq \binom{d+s}{s}+\binom{d+s-1}{s-1}. 
\]
\end{theorem}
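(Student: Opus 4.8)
The plan is to run the standard polynomial (Koornwinder / Delsarte--Goethals--Seidel) method, letting the two congruence hypotheses enter only through a determinant evaluated modulo $p$. For each $x\in X$ define the polynomial function $F_x$ on $\mathbb{R}^d$ by $F_x(y)=\prod_{i=1}^{s}(\|y-x\|^2-a_i)$; equivalently $F_x(y)=G(\|y-x\|^2)$, where $G(t)=\prod_{i=1}^{s}(t-a_i)$ is monic of degree $s$. I will establish (a) that the functions $F_x|_X$, $x\in X$, are linearly independent, and (b) that every $F_x$ lies in a fixed space $U$ of polynomials with $\dim U=\binom{d+s}{s}+\binom{d+s-1}{s-1}$. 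Since (a) produces $|X|$ linearly independent functions all sitting, by (b), in a space of dimension $\dim U$, this gives $|X|\le\binom{d+s}{s}+\binom{d+s-1}{s-1}$.

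For (a) I would examine the $|X|\times|X|$ matrix $M=(F_x(y))_{x,y\in X}$. Because $\|y-x\|^2\in D(X)\cup\{0\}\subset\mathbb{Z}$ for all $x,y\in X$, every entry is a product of integers and hence an integer. On the diagonal, $F_x(x)=G(0)=(-1)^s\prod_{i=1}^{s}a_i$, which is nonzero modulo $p$ by hypothesis~$(1)$. Off the diagonal, for $x\neq y$ the value $\|y-x\|^2$ lies in $D(X)$, so by hypothesis~$(2)$ it is congruent to some $a_j$ modulo $p$, and then $F_x(y)=G(\|y-x\|^2)\equiv G(a_j)=0\pmod p$. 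Thus $M$ is congruent modulo $p$ to a diagonal matrix with nonzero diagonal, so $\det M\not\equiv 0\pmod p$; in particular $\det M\neq 0$, $M$ is nonsingular, and the $F_x|_X$ are linearly independent.

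For (b), expand $\|y-x\|^2=\|y\|^2+(\|x\|^2-2\langle y,x\rangle)$ and apply the binomial theorem: for $0\le m\le s$,
\[
\|y-x\|^{2m}=\sum_{a=0}^{m}\binom{m}{a}\,\|y\|^{2a}\,(\|x\|^2-2\langle y,x\rangle)^{m-a},
\]
and each factor $(\|x\|^2-2\langle y,x\rangle)^{m-a}$ is a polynomial in the coordinates $y_1,\dots,y_d$ of degree at most $m-a\le s-a$. Hence $F_x$, being a linear combination of $\|y-x\|^{2m}$ for $0\le m\le s$, lies in $U:=\sum_{a=0}^{s}\|y\|^{2a}P_{s-a}$, where $P_k$ is the space of polynomials in $y_1,\dots,y_d$ of degree at most $k$. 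It remains to compute $\dim U$. Naively counting the spanning set $\{\|y\|^{2a}y^{\beta}:a+|\beta|\le s\}$ only gives $\dim U\le\binom{d+s+1}{s}$, which is too large; the point is that $\|y\|^2=y_1^2+\cdots+y_d^2$ is a genuine polynomial relation. Writing $U^{(r)}=\sum_{a=0}^{r}\|y\|^{2a}P_{r-a}$ one has $U^{(r)}=P_r+\|y\|^2U^{(r-1)}$ and $P_r\cap\|y\|^2U^{(r-1)}=\|y\|^2P_{r-2}$ (a polynomial $\|y\|^2q$ has degree $\le r$ precisely when $\deg q\le r-2$, and $P_{r-2}\subset U^{(r-1)}$); so, since multiplication by $\|y\|^2$ is injective,
\[
\dim U^{(r)}=\binom{d+r}{r}+\dim U^{(r-1)}-\binom{d+r-2}{r-2},
\]
the last term being $0$ for $r\le 1$. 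As $\dim U^{(0)}=1$, this telescopes to $\dim U=\dim U^{(s)}=\binom{d+s}{s}+\binom{d+s-1}{s-1}$.

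The main obstacle is the dimension bookkeeping in (b): checking that $P_r\cap\|y\|^2U^{(r-1)}$ is exactly $\|y\|^2P_{r-2}$ — so that no relations are missed and none are double-counted — and that the recursion collapses to precisely $\binom{d+s}{s}+\binom{d+s-1}{s-1}$. By contrast, step (a) is routine once $M$ is written down, and it is the only place the hypotheses are used: $(1)$ keeps the diagonal of $M$ nonzero modulo $p$, and $(2)$ kills the off-diagonal. It is worth noting that the bound equals $\dim U$ exactly, so this choice of $F_x$ cannot do better; the sharper non-modular bound $\binom{d+s}{s}$ requires extra input (e.g.\ a harmonic decomposition) that is not available here.
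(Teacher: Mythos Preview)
Your proof is correct and follows the same Koornwinder strategy as the paper's proof of its generalization (Theorem~3.1): the same test functions $F_{\x}$ and the same ambient space $U=P_s(\mathbb{R}^d)$. The one genuine difference is the linear-independence step. You use the classical determinant argument: the matrix $(F_{\x}(\y))_{\x,\y\in X}$ has integer entries and reduces modulo $p$ to a diagonal matrix with unit diagonal, so its determinant is a nonzero integer. The paper instead assumes a real dependence $\sum_{\x}m_{\x}F_{\x}=0$, forms the finitely generated $A_{\mathfrak p}$-module $M=\sum_{\x}m_{\x}A_{\mathfrak p}$ (with $A=\mathcal{O}_K$), shows from the same congruences that $M=\mathfrak{p}A_{\mathfrak p}M$, and applies Nakayama's lemma to conclude $M=0$. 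Over $\mathbb{Z}$ these arguments are equivalent in strength and yours is the more elementary (and essentially Blokhuis's original); the Nakayama formulation is what the paper needs to pass cleanly to an arbitrary ring of integers and its localization, since it handles real coefficients $m_{\x}$ that need not lie in $\mathcal{O}_K$ without ever writing down a determinant in that ring. You also supply a self-contained recursion for $\dim U$, which the paper simply quotes from Bannai--Bannai--Stanton; your intersection identity $P_r\cap\|y\|^2U^{(r-1)}=\|y\|^2P_{r-2}$ and the telescoping are correct.
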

For sets in a sphere, several projective spaces, or $Q$-polynomial association schemes,  Theorem~\ref{thm:modp} can be analogously obtained. However, the $r$-concentric spherical version is still open. 
The assumption $D(X) \subset \mathbb{Z}$ in Theorem~\ref{thm:modp} is surely strong, and the sets to 
which the theorem can be applied are restricted. In this paper, we extend Theorem~\ref{thm:modp} to  
the ring of integers $\mathcal{O}_K$ of an algebraic number field $K$, and any prime ideal $\mathfrak{p}$ of it. Note that throughout this paper, we fix an embedding of $K$ into $\mathbb{C}$, and $K$ is interpreted as a subfield of $\mathbb{C}$. 
We use Koornwinder's method to prove this mod-$\mathfrak{p}$ upper bound. However the method  
 to prove the linear independence of polynomial functions is  new. 
In the proof, the localization $A_{\mathfrak{p}}$ of $A=\mathcal{O}_K$ by a prime ideal $\mathfrak{p}$ plays a key role and Nakayama's lemma is applied for a certain finitely generated $A_{\mathfrak{p}}$-module.  
This method is purely algebraic and uniformly applicable to 
the polynomial spaces \cite{G89}, \cite[Sections 14--16]{Gb}, which includes the Euclidean sphere, the real, complex or quaternionic projective spaces (see \cite{DL98,L92} for the theory of $s$-distance set in these projective spaces), or $Q$-polynomial association schemes (which include the theory of $L$-intersecting family as codes of Johnson or Hamming schemes) \cite{D73}. 

The paper is organized as follows. 
In Section \ref{sec:2}, 
we introduce basic terminology and results about algebraic number fields. 
In Section \ref{sec:3},
we prove a generalization of Theorem~\ref{thm:modp} (mod-$\mathfrak{p}$ bound) for 
the ring of integers $\mathcal{O}_K$ of an algebraic number field $K$ and a prime ideal $\mathfrak{p}\subset \mathcal{O}_K$. 
We also comment on the version of the theorem for an ideal that may not be prime.  
In Section~\ref{sec:4}, 
we extend the LRS type theorem proved in \cite{LRS77,N11}. 
Namely, if the cardinality of an $s$-distance set is relatively large, 
then a certain ratio of squared distances must be an algebraic integer (see Theorem~\ref{thm:LRS}). 
We explain the relationship between the LRS type theorem and mod-$\mathfrak{p}$ bound, which can refine an upper bound on the size of an $s$-distance set with given distances.

\section{Preliminaries} \label{sec:2}
An extension field $K$ of rationals $\mathbb{Q}$ is an {\it algebraic number field} if the degree $[K:\mathbb{Q}]$ is finite. 
An algebraic number field $K$ can be embedded into $\mathbb{C}$, and $K$ is always identified with a fixed specific subfield of $\mathbb{C}$. 
The {\it ring of integers $\mathcal{O}_K$} is the ring consisting of all algebraic integers in $K$, where an {\it algebraic integer} is a complex number which is a root of a monic polynomial with integer coefficients.   
It is well known that $K$ is the quotient field of $\mathcal{O}_K$, a prime ideal of $\mathcal{O}_K$ is maximal, $\mathcal{O}_K$ is a finitely generated free $\mathbb{Z}$-module,  and $\mathcal{O}_K$ may not be a principal ideal domain. 
For easy examples, if $K=\mathbb{Q}$, then $\mathcal{O}_K=\mathbb{Z}$. 
If $K=\mathbb{Q}(\sqrt{d})$ for a square-free integer $d$, then 
\[
\mathcal{O}_K=\begin{cases}
\mathbb{Z}+ \frac{1+\sqrt{d}}{2} \mathbb{Z} \text{\quad  if $ d \equiv 1 \pmod{4}$},\\ 
\mathbb{Z}+ \sqrt{d} \mathbb{Z} \text{\quad if $ d \equiv 2,3 \pmod{4}$}.
\end{cases}
\]

Suppose a ring is commutative and contains the identity. 
For a ring $A$, $(A,\mathfrak{m})$ is a {\it local ring} if $A$ has a unique maximal ideal $\mathfrak{m}$. 
It is well known that for a ring $A$ and its maximal ideal $\mathfrak{p}\subset A$, we can construct a local ring $(A_{\mathfrak{p}}, \mathfrak{p} A_{\mathfrak{p}})$.  
For $A=\mathcal{O}_K$, the local ring is 
\[
A_\mathfrak{p}=S^{-1} A=\{a/s \in K \mid a \in A, s \in S\},
\]
where $S=A \setminus \mathfrak{p}$. 
Its unique maximal ideal is $\mathfrak{p} A_{\mathfrak{p}}$, which is the ideal of 
$A_{\mathfrak{p}}$ generated by the elements of $\mathfrak{p}$. Note that $A_\mathfrak{p}$ is a principal ideal domain. 
The natural map $f: A/\mathfrak{p} \rightarrow A_\mathfrak{p}/ \mathfrak{p} A_\mathfrak{p}$ is a field isomorphism. 

The following theorem is called Nakayama's lemma, which plays a key role in a proof of the main theorem. For a local ring $(A,\mathfrak{m})$, the ideal $I$ in Theorem~\ref{thm:nakayama} is $\mathfrak{m}$. 
\begin{theorem}\label{thm:nakayama}
Let $A$ be a ring. Let $I$ be an ideal that is contained in all maximal ideals of $A$. Let $M$ be a finitely generated $A$-module. 
If $IM=M$, then $M=\{0\}$. 
\end{theorem}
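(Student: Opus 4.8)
The plan is to use the classical Cayley--Hamilton (adjugate) determinant trick. Since $M$ is finitely generated, I fix a finite generating set $m_1,\dots,m_n$. The equality $IM=M$ lets me write each generator as $m_i=\sum_{j=1}^{n}a_{ij}m_j$ with every coefficient $a_{ij}\in I$; it is essential here that we have the equality $IM=M$, not merely an inclusion. Rewriting these $n$ relations, the $n\times n$ matrix $B=(\delta_{ij}-a_{ij})$ with entries in $A$ annihilates the column vector $(m_1,\dots,m_n)^{\mathrm{T}}\in M^{n}$.

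The next step is to multiply this relation on the left by the adjugate matrix of $B$ and use that $\operatorname{adj}(B)\,B$ equals $\det(B)$ times the identity matrix, an identity valid over any commutative ring with unit. This forces $\det(B)\,m_i=0$ for every $i$, so $\det(B)$ annihilates all of $M$. Expanding $\det(B)$ by the Leibniz formula, the term from the identity permutation is $\prod_i(1-a_{ii})$ and every other term contains at least one factor $a_{ij}\in I$, so $\det(B)=1-c$ for some $c\in I$.

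It remains to observe that $1-c$ is a unit of $A$: otherwise it would lie in some maximal ideal $\mathfrak m$, but $c\in I$ and $I$ is contained in every maximal ideal, so $c\in\mathfrak m$ and hence $1=(1-c)+c\in\mathfrak m$, a contradiction. Since a unit of $A$ annihilates $M$, we conclude $M=\{0\}$.

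I do not expect a real obstacle: once the determinant trick is recalled the argument is routine, the only points requiring care being that the scalars $a_{ij}$ genuinely lie in $I$ (this is where $IM=M$, as opposed to an inclusion, is used) and that the hypothesis ``$I$ contained in every maximal ideal'' is exactly what makes $1-c$ invertible. One could instead avoid matrices by inducting on the size of a generating set: with $m_1,\dots,m_n$ a generating set of minimal size $n\ge 1$, $IM=M$ yields $m_n=\sum_j a_jm_j$ with $a_j\in I$, hence $(1-a_n)m_n=\sum_{j<n}a_jm_j$; as $1-a_n$ is a unit by the same argument, $m_n$ lies in the submodule generated by $m_1,\dots,m_{n-1}$, contradicting minimality unless $n=0$, i.e.\ $M=\{0\}$.
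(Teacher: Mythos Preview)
Your proof is correct; this is the standard Cayley--Hamilton/adjugate proof of Nakayama's lemma, and the alternative minimal-generating-set induction you sketch at the end is also valid. There is nothing to compare against, however: the paper does not prove this statement. It appears in the preliminaries section as a quoted classical result and is simply invoked later (in the proof of Theorem~\ref{thm:main}) for the local ring $(A_{\mathfrak p},\mathfrak p A_{\mathfrak p})$, where the relevant ideal is the unique maximal ideal $\mathfrak p A_{\mathfrak p}$.
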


In order to prove the main theorem, we use the polynomial 
\[
        f_{\x}(\boldsymbol{\xi})=\prod_{i=1}^s (||\x-\boldsymbol{\xi}||^2-a_i),
\]
for $\x \in \mathbb{R}^d$, $a_i \in \mathbb{R}$, and variables $\boldsymbol{\xi}=(\xi_1,\ldots, \xi_d)$, where $||\x||$ is the Euclidean norm of $\x$. By proving the linear independence of $\{f_{\x}\}_{\x \in X}$ as polynomial functions, the cardinality $|X|$ can be bounded above by the dimension of a certain linear space that contains $f_{\x}$. We use the same polynomial space  used in Bannai--Bannai--Stanton \cite{BBS83}. 
For $\xi_0=\xi_1^2+\cdots + \xi_d^2$, we define the polynomial space $P_s(\mathbb{R}^d)$ that consists of all real polynomial functions on $\mathbb{R}^d$ which are spanned by $\xi_0^{\lambda_0}\xi_1^{\lambda_1}\cdots \xi_d^{\lambda_d}$ with $\sum_{i=0}^d\lambda_i\leq s$.  
The dimension of $P_s(\mathbb{R}^d)$ is equal to $\binom{d+s}{s}+\binom{d+s-1}{s-1}$. 

\section{Bounds for $s$-distance sets modulo $\mathfrak{p}$} \label{sec:3}
The following is the main theorem in this paper. 
\begin{theorem}[mod-$\mathfrak{p}$ bound] \label{thm:main}
Let $X$ be a subset of $\mathbb{R}^d$, and $A=\mathcal{O}_K$ the ring of integers of an algebraic number field $K$. 
Let $\mathfrak{p}$ be a prime ideal of $A$. 
Suppose $D(X) \subset A_{\mathfrak{p}}$. 
If there exist $a_1,\ldots, a_s \in A_\mathfrak{p}$ distinct modulo $\mathfrak{p}A_{\mathfrak{p}}$ such that 
\begin{enumerate}
    \item for each $i \in \{1,\ldots,s\}$, $a_i \not \equiv 0 \pmod{\mathfrak{p}A_\mathfrak{p}}$ and 
    \item for each $\alpha \in D(X)$, there exists $i \in \{1,\ldots,s\}$ such that $\alpha \equiv a_i \pmod{\mathfrak{p}A_{\mathfrak{p}}}$,
\end{enumerate}
then 
\[
|X| \leq \binom{d+s}{s}+\binom{d+s-1}{s-1}. 
\]
\end{theorem}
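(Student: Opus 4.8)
The plan is to follow Koornwinder's method: to each point $\x \in X$ associate the polynomial $f_{\x}(\boldsymbol{\xi}) = \prod_{i=1}^s (||\x - \boldsymbol{\xi}|| - a_i)$, show that the $f_{\x}$ lie in (a suitable reinterpretation of) the space $P_s(\mathbb{R}^d)$ of dimension $\binom{d+s}{s}+\binom{d+s-1}{s-1}$, and prove that $\{f_{\x}\}_{\x \in X}$ is linearly independent. The key evaluation is that $f_{\x}(\y)$ for $\x, \y \in X$ equals $\prod_i (\alpha - a_i)$ when $\x \neq \y$, where $\alpha = ||\x - \y||^2 \in D(X)$, and equals $\prod_i (-a_i)$ when $\x = \y$. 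By hypothesis (2), $\alpha \equiv a_j$ for some $j$, so $f_{\x}(\y) \equiv 0 \pmod{\mathfrak{p}A_{\mathfrak{p}}}$ for $\x \neq \y$; by hypothesis (1), $f_{\x}(\x) = \prod_i(-a_i)$ is a unit in $A_{\mathfrak{p}}$ since no $a_i$ lies in $\mathfrak{p}A_{\mathfrak{p}}$. So the "Gram-type" matrix $\big(f_{\x}(\y)\big)_{\x,\y \in X}$ is congruent modulo $\mathfrak{p}A_{\mathfrak{p}}$ to a diagonal matrix with unit entries, hence has nonzero determinant over the residue field — but the subtlety is that $||\x - \boldsymbol{\xi}||$ is not a polynomial, so one first expands $f_{\x}$ using $||\x - \boldsymbol{\xi}|| = \xi_0 - 2\la \x, \boldsymbol{\xi}\ra + ||\x||^2$ to see $f_{\x} \in P_s(\mathbb{R}^d)$, and the coefficients of this expansion lie in $A_{\mathfrak{p}}$ precisely because $||\x||^2$ and the $a_i$ do (note $||\x||^2 \in D(X) \cup \{0\}$ need not hold; rather one should argue the relevant evaluations, not coefficients, which is the cleaner route).

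The algebraic heart, replacing the determinant-over-a-field argument that works when $A_{\mathfrak{p}}$ is a field, is to work with the finitely generated $A_{\mathfrak{p}}$-module $M$ generated by the restrictions of the monomials $\xi_0^{\lambda_0}\cdots\xi_d^{\lambda_d}$ (with $\sum \lambda_i \le s$) to $X$, viewed inside the free module $A_{\mathfrak{p}}^X$ of functions $X \to A_{\mathfrak{p}}$. Each $f_{\x}|_X$ lies in $M$. Suppose $\sum_{\x} c_{\x} f_{\x}|_X = 0$ with $c_{\x} \in A_{\mathfrak{p}}$, not all zero; after clearing a common power of a generator of $\mathfrak{p}A_{\mathfrak{p}}$ (using that $A_{\mathfrak{p}}$ is a PID), we may assume some $c_{\y}$ is a unit. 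Evaluating the relation at $\y$ and reducing modulo $\mathfrak{p}A_{\mathfrak{p}}$ gives $c_{\y}\, f_{\y}(\y) \equiv 0$, i.e. a unit times a unit is zero in the field $A_{\mathfrak{p}}/\mathfrak{p}A_{\mathfrak{p}}$ — contradiction. This shows $\{f_{\x}|_X\}$ is linearly independent over $A_{\mathfrak{p}}$, hence over $K$, hence $|X| \le \dim_K (M \otimes_{A_{\mathfrak{p}}} K)$.

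The remaining step is to bound this dimension by $\binom{d+s}{s}+\binom{d+s-1}{s-1} = \dim P_s(\mathbb{R}^d)$. Here I would use Nakayama's lemma (Theorem~\ref{thm:nakayama}) applied to $A_{\mathfrak{p}}$: the module $M$ is generated by the $\binom{d+s}{s}+\binom{d+s-1}{s-1}$ images of the monomials $\xi_0^{\lambda_0}\cdots\xi_d^{\lambda_d}$, so $M$ is generated by at most that many elements, and by the theory of finitely generated modules over a PID, $\dim_K(M\otimes K) \le$ (minimal number of generators of $M$) $\le \binom{d+s}{s}+\binom{d+s-1}{s-1}$. More precisely, $M$ is a submodule of the free module $A_{\mathfrak{p}}^X$ so it is free of some rank $\rho$, and $\rho = \dim_K(M \otimes K)$; on the other hand $M/\mathfrak{p}A_{\mathfrak{p}}M$ is generated by the images of the monomials, so $\dim_{A_{\mathfrak{p}}/\mathfrak{p}A_{\mathfrak{p}}}(M/\mathfrak{p}A_{\mathfrak{p}}M) \le \binom{d+s}{s}+\binom{d+s-1}{s-1}$, and Nakayama forces $\rho$ to equal this residue-field dimension, giving the bound. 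Combining, $|X| \le \rho \le \binom{d+s}{s}+\binom{d+s-1}{s-1}$.

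I expect the main obstacle to be the careful bookkeeping to make sure everything genuinely lives over $A_{\mathfrak{p}}$ rather than over $K$ — specifically that $f_{\x}|_X$ has values (and, when expanded, coefficients against the monomial basis) in $A_{\mathfrak{p}}$, which requires that $||\x - \y||^2 \in A_{\mathfrak{p}}$ for all $\x, \y \in X$; this is exactly the hypothesis $D(X) \subset A_{\mathfrak{p}}$, but one must check the constant term $\prod_i(||\x||^2 \cdot 0 - a_i)$-type contributions and the cross terms are handled by working with the \emph{function} $f_{\x}|_X \in A_{\mathfrak{p}}^X$ directly rather than with abstract polynomial coefficients. The other delicate point is the "clearing denominators / common power of $\mathfrak{p}$" reduction in the linear-independence argument, which is where the failure of $A$ to be a PID is circumvented by passing to the localization $A_{\mathfrak{p}}$, and where the isomorphism $A/\mathfrak{p} \cong A_{\mathfrak{p}}/\mathfrak{p}A_{\mathfrak{p}}$ is used to keep the residue field concrete.
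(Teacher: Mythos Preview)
Your setup --- the polynomials $f_{\x}$, the key evaluations, the target space $P_s(\mathbb{R}^d)$ --- matches the paper exactly, and your valuation-clearing argument correctly proves $A_{\mathfrak{p}}$-linear independence of the restrictions $f_{\x}|_X\in A_{\mathfrak{p}}^X$. But your ``remaining step'' has a gap: you take $M\subset A_{\mathfrak{p}}^X$ to be the $A_{\mathfrak{p}}$-module generated by the monomial restrictions $\xi_0^{\lambda_0}\cdots\xi_d^{\lambda_d}|_X$ and assert $f_{\x}|_X\in M$. Neither claim is justified --- the coordinates of points of $X$ are arbitrary reals, so the monomial values need not lie in $A_{\mathfrak{p}}$, and when $f_{\x}$ is expanded in the monomial basis the coefficients involve $\|\x\|^2$ and the $x_i$, which likewise need not lie in $A_{\mathfrak{p}}$. (Your appeal to Nakayama here is also superfluous: a module with $N$ generators trivially has rank at most $N$.)

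The paper places Nakayama elsewhere, and this is the essential novelty. It proves $\mathbb{R}$-linear independence of the $f_{\x}$ directly: given $\sum_{\x}m_{\x}f_{\x}=0$ with real $m_{\x}$, form the finitely generated $A_{\mathfrak{p}}$-submodule $M=\sum_{\x}m_{\x}A_{\mathfrak{p}}$ of $\mathbb{R}$; evaluating at each $\y$ shows $m_{\y}f_{\y}(\y)=-\sum_{\x\ne\y}m_{\x}f_{\x}(\y)\in\mathfrak{p}A_{\mathfrak{p}}M$, hence $m_{\y}\in\mathfrak{p}A_{\mathfrak{p}}M$ since $f_{\y}(\y)$ is a unit, so $M=\mathfrak{p}A_{\mathfrak{p}}M$ and Nakayama gives $M=0$. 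The bound $|X|\le\dim_{\mathbb{R}}P_s(\mathbb{R}^d)$ is then immediate, with no need to track whether monomial values or expansion coefficients lie in $A_{\mathfrak{p}}$. Your route is salvageable --- $A_{\mathfrak{p}}$-independence of the vectors $f_{\x}|_X\in A_{\mathfrak{p}}^X$ forces $\mathbb{R}$-independence of the $f_{\x}$ via a nonvanishing minor, after which $|X|\le\dim P_s(\mathbb{R}^d)$ follows directly without your auxiliary module --- but the paper's single use of Nakayama on the module of \emph{coefficients} is the intended and cleaner argument.
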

\begin{proof}
        For each $\x \in X$, we define the polynomial  $f_{\x}(\boldsymbol{\xi})\in P_s(\mathbb{R}^d)$   as 
        \[
        f_{\x}(\boldsymbol{\xi})=\prod_{i=1}^s (||\x-\boldsymbol{\xi}||^2-a_i),
        \]
        where if needed, we replace $a_i$ with a real value equivalent to $a_i$ modulo $\mathfrak{p}A_{\mathfrak{p}}$. 
        These polynomials satisfy 
        \begin{equation} \label{eq:f_x(x)}
        f_{\x}(\boldsymbol{x})=(-1)^s \prod_{i=1}^s a_i \not\equiv 0 \pmod{\mathfrak{p}A_{\mathfrak{p}}}, 
        \end{equation}
        and
        \begin{equation} \label{eq:f_x(y)}
        f_{\x}(\boldsymbol{y}) \equiv 0 \pmod{\mathfrak{p}A_{\mathfrak{p}}}
        \end{equation}
        for $\x \ne \y \in X$. 
        
        We prove $\{f_{\x}\}_{\x \in X}$ is linearly independent as  polynomial functions on  $\mathbb{R}^d$.
        Assume there exist $m_{\x} \in \mathbb{R}$ such that 
        \begin{equation} \label{eq:=0}
            \sum_{\x \in X} m_{\x} f_{\x}(\boldsymbol{\xi})=0.
        \end{equation} 
        Let $M$ be an $A_\mathfrak{p}$-module
        generated by a finite set $\{m_{\x} \}_{\x \in X}$, namely
        \[
        M=\sum_{\x \in X} m_{\x} A_\mathfrak{p}.       \]
        From equalities \eqref{eq:=0} and \eqref{eq:f_x(y)}, for each $\y \in X$, 
        \[
        m_{\y} f_{\y}(\y)= - \sum_{\y \ne \x \in X} m_{\x} f_{\x}(\y) \in \mathfrak{p}A_{\mathfrak{p}}M.
        \]
        Since $f_{\y}(\y) \in A_{\mathfrak{p}} \setminus \mathfrak{p}A_{\mathfrak{p}}$ from equality \eqref{eq:f_x(x)}, it follows that $f_{\y}(\y) \in A_{\mathfrak{p}}^\times$ and  
        \[
         m_{\y} = - \sum_{\y \ne \x \in X} m_{\x} f_{\x}(\y)(f_{\y}(\y))^{-1} \in \mathfrak{p}A_{\mathfrak{p}}M. 
        \]
        This implies that $M \subset \mathfrak{p}A_{\mathfrak{p}}M$, and hence 
        $M = \mathfrak{p}A_{\mathfrak{p}}M$. 
        By Nakayama's lemma, $M=\{0\}$ and $m_{\x}=0$ for each $ \x \in X$. 
        Therefore $\{f_{\x}\}_{\x \in X}$ is linearly independent, and 
        \[
        |X| =|\{f_{\x}\}_{\x \in X}| \leq \dim P_s(\mathbb{R}^d)=\binom{d+s}{s} +\binom{d+s-1}{s-1}
        \]
        as desired. 
\end{proof}
\begin{corollary} \label{coro:modp}
Let $X$ be a subset of $\mathbb{R}^d$, and $\mathcal{O}_K$ the ring of integers of an algebraic number field $K$. 
Let $\mathfrak{p}$ be a prime ideal of $\mathcal{O}_K$. 
Suppose $D(X) \subset\mathcal{O}_K$. 
If there exist $a_1,\ldots, a_s \in \mathcal{O}_K$ distinct modulo $\mathfrak{p}$ such that 
\begin{enumerate}
    \item for each $i \in \{1,\ldots,s\}$, $a_i \not \equiv 0 \pmod{\mathfrak{p}}$ and 
    \item for each $\alpha \in D(X)$, there exists $i \in \{1,\ldots,s\}$ such that $\alpha \equiv a_i \pmod{\mathfrak{p}}$,
\end{enumerate}
then 
\[
|X| \leq \binom{d+s}{s}+\binom{d+s-1}{s-1}. 
\]
\end{corollary}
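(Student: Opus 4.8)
The plan is to derive Corollary~\ref{coro:modp} directly from Theorem~\ref{thm:main} by checking that the hypotheses of the theorem are satisfied once we pass from $\mathcal{O}_K$ and the maximal ideal $\mathfrak{p}$ to the localization $A_{\mathfrak{p}}=S^{-1}A$ with $S=A\setminus\mathfrak{p}$ and its maximal ideal $\mathfrak{p}A_{\mathfrak{p}}$. Since $A=\mathcal{O}_K\subset A_{\mathfrak{p}}$, the assumption $D(X)\subset\mathcal{O}_K$ immediately gives $D(X)\subset A_{\mathfrak{p}}$, and the prescribed values $a_1,\dots,a_s\in\mathcal{O}_K$ lie in $A_{\mathfrak{p}}$ as well. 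So the only thing to verify is that the congruence conditions are preserved under the inclusion $A\hookrightarrow A_{\mathfrak{p}}$.

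The key point, recorded in Section~\ref{sec:2}, is that the natural map $f\colon A/\mathfrak{p}\to A_{\mathfrak{p}}/\mathfrak{p}A_{\mathfrak{p}}$ is a field isomorphism. First I would spell out what this gives: for $a,b\in A$ we have $a\equiv b\pmod{\mathfrak{p}}$ in $A$ if and only if $a\equiv b\pmod{\mathfrak{p}A_{\mathfrak{p}}}$ in $A_{\mathfrak{p}}$, because $f$ is injective (and the images of $a,b$ under $A\to A_{\mathfrak{p}}\to A_{\mathfrak{p}}/\mathfrak{p}A_{\mathfrak{p}}$ are exactly $f(a\bmod\mathfrak{p})$ and $f(b\bmod\mathfrak{p})$). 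Applying this with $b=a_j$ and $b=0$: the elements $a_1,\dots,a_s$ are pairwise distinct modulo $\mathfrak{p}$, hence pairwise distinct modulo $\mathfrak{p}A_{\mathfrak{p}}$; each $a_i\not\equiv 0\pmod{\mathfrak{p}}$ forces $a_i\not\equiv 0\pmod{\mathfrak{p}A_{\mathfrak{p}}}$; and for $\alpha\in D(X)\subset A$, the relation $\alpha\equiv a_i\pmod{\mathfrak{p}}$ yields $\alpha\equiv a_i\pmod{\mathfrak{p}A_{\mathfrak{p}}}$.

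Having checked all hypotheses, I would invoke Theorem~\ref{thm:main} verbatim to conclude $|X|\leq\binom{d+s}{s}+\binom{d+s-1}{s-1}$. There is essentially no obstacle here: the corollary is the special case of the theorem where all the data already live in $\mathcal{O}_K$ rather than merely in $A_{\mathfrak{p}}$, and the field isomorphism $A/\mathfrak{p}\cong A_{\mathfrak{p}}/\mathfrak{p}A_{\mathfrak{p}}$ does all the translating work. The only subtlety worth a sentence is the need to make sure the congruence $\alpha\equiv a_i\pmod{\mathfrak p}$ for $\alpha\in\mathcal{O}_K$ really does transport to the localization, i.e. that $\mathfrak{p}A_{\mathfrak{p}}\cap A=\mathfrak{p}$; this is standard for localizations at a prime and is already implicit in the stated injectivity of $f$.
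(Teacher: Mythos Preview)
Your proposal is correct and is exactly the paper's approach: the paper's proof consists of the single sentence that since $A=\mathcal{O}_K\subset A_{\mathfrak{p}}$ and $A/\mathfrak{p}\cong A_{\mathfrak{p}}/\mathfrak{p}A_{\mathfrak{p}}$, the corollary is immediate from Theorem~\ref{thm:main}. You have simply unpacked that sentence in more detail.
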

\begin{proof}
Since $A=\mathcal{O}_K \subset A_{\mathfrak{p}}$ and $A/\mathfrak{p} \cong A_\mathfrak{p}/\mathfrak{p} A_{\mathfrak{p}}$, 
this corollary is immediate from Theorem~\ref{thm:main}. 
\end{proof}
\begin{example}
For $X=\{(0,0),(1,0), (-\sqrt{3}/2,1/2),(-\sqrt{3}/2,-1/2)\} \subset \mathbb{R}^2$, the squared distances are $D(X)=\{1,2+\sqrt{3}\}$. 
We take the algebraic number field $K=\mathbb{Q}(\sqrt{3})$. 
Then the ring of integers is $\mathcal{O}_K=\mathbb{Z}+\sqrt{3}\mathbb{Z}$, and $\mathfrak{p}=(1+\sqrt{3})$ is a prime ideal of $\mathcal{O}_K$. 
Since $1\equiv 2+\sqrt{3} \pmod{\mathfrak{p}}$ holds, 
we have $|X| \leq \binom{d+1}{1}+\binom{d}{0}=4$. The set $X$ is an example attaining the upper bound in Corollary~\ref{coro:modp}.
\end{example}

We can prove a similar theorem to Theorem~\ref{thm:main} for an ideal $I \subset \mathcal{O}_K$ which may not be prime as follows. 
\begin{theorem}\label{thm:any_ideal}
Let $X$ be a subset of $\mathbb{R}^d$, and $A=\mathcal{O}_K$ the ring of integers of an algebraic number field $K$. 
Let $I$ be an ideal of $A$, and $I=\mathfrak{p}_1^{\lambda_1} \cdots \mathfrak{p}_r^{\lambda_r}$ the prime decomposition of $I$. 
Let $A_{I}=S^{-1}A=\{a/s \mid a \in A, s \in S\}$, where $S=\bigcup_{R \in (A/I)^\times} R$.  
Suppose $D(X) \subset A_{I}$. 
If there exist $a_1,\ldots, a_s \in A_I$ distinct modulo $I A_{I}$ such that 
\begin{enumerate}
    \item for each $i \in \{1,\ldots,s\}$, $a_i \in A_I^\times$ and 
    \item for each $\alpha \in D(X)$, there exists $i \in \{1,\ldots,s\}$ such that $\alpha \equiv a_i \pmod{I A_{I}}$,
\end{enumerate}
then 
\[
|X| \leq \binom{d+s}{s}+\binom{d+s-1}{s-1}. 
\]
\end{theorem}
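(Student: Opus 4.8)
The plan is to run the proof of Theorem~\ref{thm:main} almost verbatim, with the semilocal ring $A_I$ playing the role previously played by the local ring $A_\mathfrak{p}$; the one genuinely new ingredient is to describe $A_I$ precisely enough to invoke Nakayama's lemma (Theorem~\ref{thm:nakayama}).

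First I would identify the multiplicative set $S$. By the Chinese Remainder Theorem $A/I \cong \prod_{j=1}^{r} A/\mathfrak{p}_j^{\lambda_j}$, and in each factor (a local ring with maximal ideal $\mathfrak{p}_j/\mathfrak{p}_j^{\lambda_j}$) an element is a unit if and only if it avoids $\mathfrak{p}_j$; hence $a + I \in (A/I)^\times$ exactly when $a \notin \mathfrak{p}_j$ for every $j$, so that $S = A \setminus \bigcup_{j=1}^{r}\mathfrak{p}_j$, which is multiplicatively closed. Consequently the primes of $A_I = S^{-1}A$ are the primes $\mathfrak{q}$ of $A$ with $\mathfrak{q} \cap S = \emptyset$, i.e. $\mathfrak{q} \subseteq \bigcup_j \mathfrak{p}_j$; by prime avoidance $\mathfrak{q} \subseteq \mathfrak{p}_j$ for some $j$, and since every nonzero prime ideal of $A = \mathcal{O}_K$ is maximal, the maximal ideals of $A_I$ are exactly $\mathfrak{p}_1 A_I, \ldots, \mathfrak{p}_r A_I$. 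Because $I = \mathfrak{p}_1^{\lambda_1}\cdots\mathfrak{p}_r^{\lambda_r} \subseteq \mathfrak{p}_j$ for every $j$, we get $I A_I \subseteq \bigcap_{j=1}^{r} \mathfrak{p}_j A_I$; that is, $I A_I$ is contained in every maximal ideal of $A_I$, which is precisely the hypothesis on the ideal ``$I$'' in Theorem~\ref{thm:nakayama}.

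With this in hand I would then transcribe the proof of Theorem~\ref{thm:main}. For $\x \in X$ put $f_{\x}(\boldsymbol{\xi}) = \prod_{i=1}^{s}(||\x - \boldsymbol{\xi}|| - a_i) \in P_s(\mathbb{R}^d)$ (replacing each $a_i$ by a real representative modulo $I A_I$ if needed, exactly as in the proof of Theorem~\ref{thm:main}). Then $f_{\x}(\x) = (-1)^s\prod_{i=1}^{s} a_i$, which lies in $A_I^\times$ directly by hypothesis~(1) --- this is where the present statement uses $a_i \in A_I^\times$ in place of the local-ring argument of Theorem~\ref{thm:main} --- while $f_{\x}(\y) \equiv 0 \pmod{I A_I}$ for distinct $\x,\y \in X$ by hypothesis~(2), since $||\x-\y|| \in D(X) \subseteq A_I$. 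If $\sum_{\x \in X} m_{\x} f_{\x} = 0$ with $m_{\x} \in \mathbb{R}$, set $M = \sum_{\x \in X} m_{\x} A_I$; then for each $\y \in X$,
\[
m_{\y} = -\sum_{\y \ne \x \in X} m_{\x}\, f_{\x}(\y)\, (f_{\y}(\y))^{-1} \in I A_I M,
\]
so $M = I A_I M$, and Nakayama's lemma forces $M = \{0\}$, hence $m_{\x} = 0$ for all $\x$. Thus $\{f_{\x}\}_{\x \in X}$ is linearly independent in $P_s(\mathbb{R}^d)$, giving $|X| \le \dim P_s(\mathbb{R}^d) = \binom{d+s}{s} + \binom{d+s-1}{s-1}$.

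I expect the only real work to lie in the commutative-algebra step --- checking that $S = A \setminus \bigcup_j \mathfrak{p}_j$, that $A_I$ is semilocal with maximal ideals the $\mathfrak{p}_j A_I$, and that $I A_I$ sits inside their intersection. After that nothing changes: the polynomials, the module $M$, and the Nakayama argument are identical to Theorem~\ref{thm:main}, with ``local'' replaced by ``semilocal'' and hypothesis~(1) supplying the unit $f_{\y}(\y) \in A_I^\times$.
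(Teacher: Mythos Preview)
Your proof is correct and follows the same route as the paper, which merely says the argument is ``similar to the proof of Theorem~\ref{thm:main}'' after identifying the Jacobson-radical ideal needed for Nakayama. The only cosmetic difference is that the paper names $\mathfrak{p}_1\cdots\mathfrak{p}_r A_I$ as the ideal to feed into Theorem~\ref{thm:nakayama}, whereas you use $IA_I$ directly; since $IA_I\subseteq\mathfrak{p}_1\cdots\mathfrak{p}_r A_I\subseteq\bigcap_j\mathfrak{p}_jA_I$, either choice works and yours is in fact the more immediate one given that the congruences in hypothesis~(2) are stated modulo $IA_I$.
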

\begin{proof}
The proof is similar to that of Theorem~\ref{thm:main}, but we use 
$\mathfrak{p}_1\cdots \mathfrak{p}_r A_I$ instead of $\mathfrak{p} A_\mathfrak{p}$ as the ideal that is contained in  all maximal ideals in Nakayama's lemma.  
\end{proof}
For Theorem \ref{thm:any_ideal}, we must choose squared distances $a_i$ from $A_I^\times$. Such distances $a_i$ can be expressed by $a_i=s_1/s_2$ for some $s_1,s_2 \in S=\bigcup_{R \in (A/I)^\times} R$. 
Since $S \subset \bigcup_{R \in (A/\mathfrak{p}_j)^\times} R$ for any $j$, the squared distances $a_i$ are also elements of $A_{\mathfrak{p}_j}^\times=A\setminus \mathfrak{p}_j$. 
The natural homomorphisms 
\begin{align*}
A_I/IA_I &\rightarrow A_I/\mathfrak{p}_1^{\lambda_1} A_I \times 
\cdots \times A_I/ \mathfrak{p}_r^{\lambda_r}A_I\\
&\rightarrow A_I/\mathfrak{p}_1 A_I \times 
\cdots \times A_I/ \mathfrak{p}_r A_I\\
&\rightarrow A_{\mathfrak{p}_1}/\mathfrak{p}_1 A_{\mathfrak{p}_1} \times 
\cdots \times A_{\mathfrak{p}_r}/ \mathfrak{p}_r A_{\mathfrak{p}_r}
\end{align*}
imply that the number of squared distances distinct modulo $IA_I$ is greater than or equal to that modulo $\mathfrak{p}_i A_{\mathfrak{p}_i}$ for any $i\in \{1,\ldots, r\}$.  
Therefore, Theorem \ref{thm:main} corresponding to the prime-ideal version gives the strongest upper bound for any ideal under our condition.   
\section{LRS type theorem}  \label{sec:4}
We now generalize the LRS type theorem proved in \cite{N11} as follows. 
The absolute bound $|X|\leq \binom{d+s}{s}$ is improved by this generalization. 
\begin{theorem} \label{thm:LRS}
Suppose $s\geq 2$. 
Let $X$ be an $s$-distance set in $\mathbb{R}^d$ and $N=\dim P_{s-1}(\mathbb{R}^d)=\binom{d+s-1}{s-1}+\binom{d+s-2}{s-2}$. 
If $|X| \geq N+(N+1)/t$ for some $t \in \mathbb{N}$, then
\begin{equation*} \label{eq:K_j}
K_j=\prod_{i=1, i\ne j}^s \frac{\alpha_i}{\alpha_i-\alpha_j}
\end{equation*}
is an algebraic integer of degree at most $t$ for each $j \in \{1,\ldots,s \}$. 
\end{theorem}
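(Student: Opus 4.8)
The plan is to adapt Koornwinder's linear‑independence method once more, but now applying it to a cleverly chosen family of polynomial functions whose linear independence forces $|X|$ to be so large that certain polynomials must vanish, and then to extract the integrality of $K_j$ from the vanishing. First I would set up, for each $\x\in X$, the polynomial $f_{\x}(\bxi)=\prod_{i=1}^{s}(\|\x-\bxi\|^2-\alpha_i)$, which lies in $P_s(\R^d)$ and satisfies $f_{\x}(\y)=0$ for $\y\in X\setminus\{\x\}$ while $f_{\x}(\x)=(-1)^s\prod_i\alpha_i\neq 0$. Writing $f_{\x}$ in the monomial basis $\xi_0^{\lambda_0}\cdots\xi_d^{\lambda_d}$, the top‑degree part (the terms with $\sum\lambda_i=s$) is $(\xi_0-2\la\x,\bxi\ra+\|\x\|^2)^{\cdots}$‑free of $\alpha_i$: indeed the degree‑$s$ part of $f_{\x}$ equals $(\|\bxi\|^2-2\la\x,\bxi\ra+\|\x\|^2)$‑type expression raised appropriately, i.e. it is exactly $(\xi_0-2\la\x,\bxi\ra+\|\x\|^2)$ homogenised — more precisely the degree‑$s$ homogeneous component is independent of the $\alpha_i$ and spans a space of dimension $N'=\dim(\text{degree-}s\text{ part})=\binom{d+s-1}{s}+\binom{d+s-2}{s-1}$. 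The key point is that $\dim P_s - N' = \dim P_{s-1}=N$, so if $|X|>N$ the images of $\{f_{\x}\}$ in the quotient $P_s/P_{s-1}$ (i.e. the top‑degree parts) are forced to be linearly dependent.

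The heart of the argument is then a counting/rank refinement: if $|X|\ge N+(N+1)/t$, I would consider the $|X|\times\dim P_s$ coefficient matrix of the $f_{\x}$ and split it by degree. Because the low‑degree block has only $N$ columns, there is a subspace $W\subset\R^X$ of dimension at least $|X|-N$ on which only the top‑degree coefficients survive; on $W$ the relations among the $f_{\x}$ are governed purely by the top‑degree parts, which do not involve the $\alpha_i$ at all and in fact depend only on the points $\x$ as vectors. Evaluating a nonzero relation $\sum_{\x} m_{\x} f_{\x}=0$ at the points of $X$ and using $f_{\x}(\y)=\delta_{\x\y}(-1)^s\prod_i\alpha_i$ gives that every such relation has all $m_{\x}=0$ unless it lives entirely in the ``degree $<s$'' part — and the dimension bookkeeping $N+(N+1)/t$ versus $N$ is exactly what is needed so that, after passing to a Galois orbit of size $\le t$ of the putative relation, one obtains a genuine polynomial relation with rational‑integer symmetric functions. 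Concretely, I expect to show that the numbers $K_j=\prod_{i\ne j}\alpha_i/(\alpha_i-\alpha_j)$ arise as the coefficients when one expresses the constant polynomial $1$ (or the ``all ones'' functional on $X$) in terms of the $f_{\x}/f_{\x}(\x)$ via Lagrange‑type interpolation in the single variable $t=\|\x-\bxi\|^2$, and the constraint $|X|\ge N+(N+1)/t$ bounds the degree of the minimal polynomial of each $K_j$ over $\Q$ by $t$, while the fact that the $\alpha_i$ lie in a ring where the relevant resultants are controlled yields that $K_j$ is an algebraic \emph{integer}.

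The main obstacle, and the step I would spend the most care on, is making precise the passage from ``the top‑degree parts are linearly dependent'' to ``$K_j$ has degree $\le t$ and is an algebraic integer.'' The degree bound should come from a pigeonhole on Galois conjugates: the set of distances $\{\alpha_i\}$ is permuted by $\mathrm{Gal}(\overline{\Q}/\Q)$, the quantities $K_j$ are permuted accordingly, and a dependence among fewer than $|X|$ vectors that must hold simultaneously for all conjugates forces the number of distinct conjugates of $K_j$ to be at most $t$ once $|X|\ge N+(N+1)/t$ — this is where the exact shape of the bound is used. The integrality should then follow from the paper's earlier localization machinery (Section~\ref{sec:2}): for every prime ideal $\mathfrak p$ one checks, by the same Nakayama‑type argument as in Theorem~\ref{thm:main}, that $K_j\in A_{\mathfrak p}$, and since $K_j$ lies in $A_{\mathfrak p}$ for all $\mathfrak p$ it is an algebraic integer. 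I would organise the write‑up so that the linear‑algebra dimension count is isolated as a lemma, the Galois pigeonhole giving $\deg K_j\le t$ is a second lemma, and the local integrality is a short corollary of the method already developed in Section~\ref{sec:3}; verifying that the various bases and degree filtrations interact as claimed (in particular that $\dim P_s-\dim(\text{top part})=\dim P_{s-1}$ and that the interpolation really produces exactly $K_j$) is the routine but delicate bookkeeping I would not grind through here.
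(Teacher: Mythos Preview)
Your proposal has a genuine gap, and it stems from choosing the wrong auxiliary polynomial and from a misconception about where the algebraic integrality of $K_j$ comes from.

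The paper's proof does \emph{not} use the degree-$s$ polynomials $f_{\x}(\boldsymbol{\xi})=\prod_{i=1}^s(\|\x-\boldsymbol{\xi}\|^2-\alpha_i)$. Instead, for fixed $j$ it drops the $j$-th factor and normalises, setting
\[
f(\x,\boldsymbol{\xi})=\prod_{i=0,\,i\neq j}^{s}\frac{\alpha_i-\|\x-\boldsymbol{\xi}\|^2}{\alpha_i-\alpha_j}
\qquad(\alpha_0=0),
\]
which lies in $P_{s-1}(\R^d)$. The evaluation matrix $M=(f(\x,\y))_{\x,\y\in X}$ then has rank at most $N=\dim P_{s-1}(\R^d)$, and by direct computation $M=K_jI+A_j$ where $A_j$ is the $(0,1)$-matrix with $(A_j)_{\x\y}=1$ exactly when $\|\x-\y\|^2=\alpha_j$. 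Since $|X|\ge N+(N+1)/t>N$, the kernel of $M$ has dimension at least $(N+1)/t$, so $-K_j$ is an eigenvalue of the \emph{integer} matrix $A_j$ of multiplicity at least $(N+1)/t$. This alone makes $K_j$ an algebraic integer. Because the characteristic polynomial of $A_j$ has integer coefficients, all Galois conjugates of $-K_j$ occur as eigenvalues with the same multiplicity; if there were more than $t$ conjugates, the matrix would have size exceeding $|X|$, a contradiction.

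Your route cannot be repaired as written. The crucial point you miss is that the squared distances $\alpha_i$ are arbitrary real numbers: they need not be algebraic, so there is no Galois action on them, no number field $K$ containing them, and no prime ideals $\mathfrak{p}$ at which to localise. Consequently your ``Galois pigeonhole'' lemma and your ``$K_j\in A_{\mathfrak{p}}$ for all $\mathfrak{p}$'' integrality argument have no foundation. The integrality of $K_j$ in the paper's proof comes not from the arithmetic of the $\alpha_i$ but from the purely combinatorial fact that $A_j$ is a $(0,1)$-matrix. Moreover, with your degree-$s$ polynomial the evaluation matrix is simply a nonzero scalar times the identity, so $K_j$ never appears; you need the Lagrange-type normalisation in $P_{s-1}$ to produce the decomposition $K_jI+A_j$. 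Your filtration $P_s/P_{s-1}$ and top-degree bookkeeping, while dimensionally consistent, do not lead to any statement about $K_j$.
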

\begin{proof}
Fix $j \in \{1,\ldots,s\}$. Define the polynomial 
\[
f(\x, \boldsymbol{\xi})=\prod_{i=1,i\ne j}^s \frac{\alpha_i-||\x-\boldsymbol{\xi}||^2}{\alpha_i-\alpha_j}
\]
for each $\x \in X$. 
Since $f(\x,\boldsymbol{\xi}) \in P_{s-1}(\mathbb{R}^d)$, 
the rank of the matrix $M=(f(\x,\y))_{\x,\y \in X}$ is at most $N$ \cite{N11}.  
The matrix can be expressed by
\[
M=K_j I + A_j,
\]
where $I$ is the identity matrix and $A_j$ is a $(0,1)$-matrix with off diagonals. 
Since the size of $M$ is at least $N+(N+1)/t>N$, the matrix has $0$ eigenvalue whose multiplicity is at least $(N+1)/t$. This implies $-K_j$ is the eigenvalue of $A_j$, and hence $K_j$ is an algebraic integer. 

Assume $K_j$ is an algebraic integer of degree larger than $t$. 
Then the number of the conjugates of $-K_j$ is at least $t$, and the conjugates are also eigenvalues of $A_j$. 
Since $A_j$ has the eigenvalue $-K_j$ with multiplicity at least $(N+1)/t$, the size of $A_j$ is at least $(t+1)(N+1)/t=N+1+(N+1)/t$, which contradicts our assumption. 
Therefore $K_j$ is an algebraic integer of degree at most $t$. 
\end{proof}

For $t=1$, the values $K_j$ are  integers under the condition in Theorem~\ref{thm:LRS}, which is the previous result proved in \cite{N11}. 
The following corollaries are immediate from Theorem~\ref{thm:LRS}.
\begin{corollary}
If $K_j$  is not an algebraic integer for some $j \in \{1,\ldots, s\}$, then $|X| \leq N$.  
\end{corollary}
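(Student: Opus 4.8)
The final statement in the excerpt is the corollary:
\begin{quote}
If $K_j$ is not an algebraic integer for some $j \in \{1,\ldots,s\}$, then $|X|\leq N$.
\end{quote}
where $N = \dim P_{s-1}(\mathbb{R}^d) = \binom{d+s-1}{s-1} + \binom{d+s-2}{s-2}$ and $K_j = \prod_{i=0,i\neq j}^s \frac{\alpha_i}{\alpha_i - \alpha_j}$, with $\alpha_0,\alpha_1,\ldots,\alpha_s$ the squared distances (and $\alpha_0 = 0$, implicitly, so that the polynomial $f(\x,\boldsymbol{\xi})$ vanishes when $\boldsymbol{\xi}=\x$ in all but the $j$-th factor — matching the setup of Theorem~\ref{thm:LRS}).

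**Approach.** This is a direct contrapositive of Theorem~\ref{thm:LRS}. The plan is as follows. Suppose, for contradiction, that $|X| > N$, i.e. $|X| \geq N+1$. I want to produce some $t \in \mathbb{N}$ with $|X| \geq N + (N+1)/t$ so that Theorem~\ref{thm:LRS} applies and forces every $K_j$ to be an algebraic integer, contradicting the hypothesis. The cleanest choice is $t = N+1$: then $N + (N+1)/t = N + 1 \leq |X|$, so the hypothesis of Theorem~\ref{thm:LRS} is satisfied with this $t$. Hence for every $j \in \{1,\ldots,s\}$ the quantity $K_j$ is an algebraic integer (of degree at most $N+1$, though the degree bound is irrelevant here). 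In particular $K_j$ is an algebraic integer for all $j$, which contradicts the assumption that some $K_j$ is not an algebraic integer. Therefore $|X| \leq N$.

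**Steps, in order.**
\begin{enumerate}
\item Assume toward a contradiction that $|X| \geq N+1$.
\item Set $t = N+1 \in \mathbb{N}$; verify $|X| \geq N+1 = N + (N+1)/t$.
\item Apply Theorem~\ref{thm:LRS} with this $t$: conclude $K_j$ is an algebraic integer for each $j \in \{1,\ldots,s\}$.
\item This contradicts the hypothesis that $K_j$ is not an algebraic integer for some $j$; conclude $|X|\leq N$.
\end{enumerate}

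**Expected obstacle.** Honestly, there is essentially no obstacle: the corollary is an immediate contrapositive-style consequence of Theorem~\ref{thm:LRS} once one picks a valid $t$, and $t=N+1$ works because $(N+1)/(N+1)=1$. The only thing to be mildly careful about is that $t$ must be a positive integer and that $N \geq 1$ (which holds since $P_{s-1}(\mathbb{R}^d)$ contains at least the constant function, so $N\geq 1$, and in fact $s\geq 2$ is assumed in Theorem~\ref{thm:LRS}), so that $t=N+1$ is a legitimate choice and the inequality chain is valid. No genuinely hard step arises.
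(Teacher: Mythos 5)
Your proposal is correct and matches the paper, which gives no explicit proof and simply declares the corollary immediate from Theorem~\ref{thm:LRS}; your choice $t=N+1$ is exactly the intended instantiation. Nothing further is needed.
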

\begin{corollary}\label{coro:imp}
Suppose $K_j$ is an algebraic integer for each $j \in \{1,\ldots,s\}$. 
Let $t$ be the maximum value of the degrees of $K_j$. 
If $t>1$ holds, then $|X| < N+(N+1)/(t-1)$. 
\end{corollary}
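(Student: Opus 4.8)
The plan is to obtain Corollary~\ref{coro:imp} directly from Theorem~\ref{thm:LRS} by contraposition, so the argument is very short. First I would record the contrapositive form of Theorem~\ref{thm:LRS}: for any $u \in \mathbb{N}$, if some $K_j$ fails to be an algebraic integer of degree at most $u$, then $|X| < N + (N+1)/u$. Under the hypotheses of the corollary every $K_j$ is already an algebraic integer, so the only way this failure can occur is that $\deg K_j > u$ for some $j$.

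Next I would use the definition of $t$ as the maximum of the degrees $\deg K_j$ over $j \in \{1,\ldots,s\}$ to fix an index $j_0$ with $\deg K_{j_0} = t$. Since $t$ is the degree of an algebraic number it is a positive integer, and the hypothesis $t > 1$ gives $t \geq 2$; hence $u := t-1$ lies in $\mathbb{N}$ and is an admissible parameter for Theorem~\ref{thm:LRS}. Because $\deg K_{j_0} = t > t-1 = u$, the contrapositive recorded above yields $|X| < N + (N+1)/(t-1)$, which is exactly the assertion. Equivalently, one may argue by contradiction: if $|X| \geq N + (N+1)/(t-1)$, then Theorem~\ref{thm:LRS} applied with parameter $t-1$ forces every $K_j$, in particular $K_{j_0}$, to have degree at most $t-1$, contradicting $\deg K_{j_0} = t$.

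There is essentially no obstacle here; the proof is a one-line deduction from the theorem just proved. The only point meriting a word of care is the verification that $t-1 \in \mathbb{N}$, i.e.\ that $t-1 \geq 1$, which is immediate from $t$ being an integer with $t > 1$, so that Theorem~\ref{thm:LRS} may legitimately be invoked with $t$ replaced by $t-1$.
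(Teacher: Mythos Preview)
Your argument is correct and is precisely the intended deduction: the paper itself gives no separate proof, stating only that the corollary is immediate from Theorem~\ref{thm:LRS}, and your contrapositive application of that theorem with parameter $t-1$ is exactly this immediate step.
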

Corollary \ref{coro:imp} is an improvement of the absolute bound for $s$-distance sets with the LRS ratios.

If there exist $\alpha_i,\alpha_j \in D(X)\subset \mathcal{O}_K$ such that $\alpha_i$ is congruent to $\alpha_j$ modulo some prime ideal $\mathfrak{p}$ and $\alpha \not \equiv 0 \pmod{\mathfrak{p}}$ for each $\alpha \in D(X)$, then the LRS ratio $K_j$ is not an algebraic integer. Indeed, if $K_j \in \mathcal{O}_K$, then 
\begin{equation} \label{eq:no}
    0\equiv K_j\prod_{i\ne j}(\alpha_i-\alpha_j) = \prod_{i \ne j} \alpha_i \not \equiv 0 \pmod{\mathfrak{p}},
\end{equation}
which is a contradiction. 
When $K_j$ is not an algebraic integer for some $j$, we obtain the bound $|X|\leq N$ by Theorem~\ref{thm:LRS}, and we may obtain a better bound depending on the number of the elements of $D(X)$ distinct modulo $\mathfrak{p}$. 

The results proved in this paper -- mod-$\mathfrak{p}$ bound and LRS type theorem-- are analogously obtained for the sphere $S^{d-1}$ \cite{DGS77}, several projective spaces \cite{DL98,L92}, or $Q$-polynomial association schemes \cite{D73,DL98}. 
For spherical case, the LRS type theorem with $\mathcal{O}_K=\mathbb{Z}$ is useful to determine largest spherical $s$-distance sets for $s=2,3$. In \cite{GY18,MN11}, 
several largest $s$-distance sets are determined by a computer assistance.
The possibilities of choices of integers $K_i$ are finite, and we can take the finite choices of distances from $K_i$. Reducing the number of the possible distances is 
helpful to cut the computational cost by a computer. 
However, Equation \eqref{eq:no} implies that it is impossible to reduce the choices of distances by our results. 
\begin{remark}
Akihiro Munemasa, one of the editors of the journal, communicated to the author the following idea to prove
Theorem \ref{thm:main} without the use of Nakayama's lemma. 
Let $f_{\x}(\boldsymbol{\xi})$ be the same as in the proof of Theorem~\ref{thm:main}. 
We consider the matrix $M=(f_{\x} (\y))_{\x,\y \in X}$, where $X$ satisfies the condition of the theorem. 
In order to prove the linear independence of $\{f_{\x} \}_{\x \in X}$, it suffices to show that the determinant of $M$ is non-zero. The entries of $M$ are elements of $A_\mathfrak{p}$, 
and $M$ is congruent to some diagonal matrix modulo $\mathfrak{p} A_{\mathfrak{p}}$ whose diagonal entries are units in $A_\mathfrak{p}$. The determinant $M$ is not congruent to 0 modulo $\mathfrak{p} A_{\mathfrak{p}}$, in particular, it is non-zero.   
\end{remark}

\bigskip

\noindent
\textbf{Acknowledgments.} 
The author thanks Akihiro Munemasa for providing the idea of an alternative proof of Theorem \ref{thm:main} as editor's comments. 
The author is supported by JSPS KAKENHI Grant Numbers 18K03396, 19K03445,  20K03527, and 22K03402.

\end{document}